\numberwithin{equation}{section}
\theoremstyle{plain}
\newtheorem{lemma}{Lemma}[section]
\newtheorem{proposition}[lemma]{Proposition}
\newtheorem{theorem}[lemma]{Theorem}
\theoremstyle{definition}
\newtheorem{definition}[lemma]{Definition}
\newtheorem{remark}[lemma]{Remark}
\newtheorem{example}[lemma]{Example}
\begin{document}
\newcommand{\R}{{\mathbb R}}
\newcommand{\C}{{\mathbb C}} 
\newcommand{\Ii}{{\mathcal I}}
\newcommand{\Jj}{{\mathcal J}}
\newcommand{\Nn}{{\mathcal N}}
\newcommand{\Ll}{{\mathcal L}}
\newcommand{\Tt}{{\mathcal T}}
\newcommand{\Gg}{{\mathcal G}}
\newcommand{\Dd}{{\mathcal D}}

\newcommand{\pr}{\operatorname{pr}}
\newcommand{\bla}{\langle \! \langle}
\newcommand{\bra}{\rangle \! \rangle}
\newcommand{\blq}{[ \! [}
\newcommand{\brq}{] \! ]}

\definecolor{orange}{rgb}{0,0,0}

\date{\today}

\title[Generalized Contact Bundles]
{Generalized Contact Bundles}

\author{Luca Vitagliano}
\address{DipMat, Universit\`a degli Studi di Salerno \& Istituto Nazionale di Fisica Nucleare, GC Salerno, via Giovanni Paolo II n${}^\circ$ 123, 84084 Fisciano (SA) Italy.}
\email{lvitagliano@unisa.it}

\author{A\"issa Wade}
\address{Department of Mathematics, Penn State University, University Park, State College, PA 16802, USA.}
\email{wade@math.psu.edu}

\begin{abstract} In this Note, we propose  a line bundle approach to  odd-dimensional analogues of generalized complex structures. This new approach  has three main advantages:  (1) it encompasses all existing ones;  (2) it elucidates the geometric meaning of the integrability condition for generalized contact structures; 
(3) in light of new results on multiplicative forms and  Spencer operators \cite{CSS2015}, it allows a simple interpretation  of the defining equations of a generalized contact structure in terms of Lie algebroids and Lie groupoids.
\end{abstract}

\keywords{Generalized geometry, generalized contact structures, Jacobi bundles, Lie algebroids, Lie groupoids, Hitchin groupoids, Atiyah bundles}

\subjclass[2010]{53D18 (Primary), 53D10, 53D15, 53D17, 53D35, 22A22}

\maketitle

\section{Introduction}
Generalized complex structures have been introduced by Hitchin in \cite{H2003} and further investigated by Gualtieri in \cite{G2011}. {They can only be supported by even-dimensional manifolds and encompass symplectic structures and complex structures as extreme cases}. Since both of these extreme cases have analogues in odd-dimensional  geometry (namely, contact and almost contact structures, respectively), it is natural to ask if there is any natural odd-dimensional analogue of generalized complex structures. 
 
Several approaches to  odd-dimensional analogues of  generalized complex structures can be found in the literature \cite{IW2005,V2008,PW2011,S2015,AG2015}. They are often  named \emph{generalized contact structures} and all of them  include contact structures globally defined by a  contact 1-form. However, none of them incorporates  non-coorientable contact structures. From a conceptual point of view, contact geometry is the geometry of an hyperplane distribution and the choice of a contact form is just a technical tool making things simpler. {Even more, there are interesting contact structures that do not possess any global contact form}. Accordingly, it would be nice to define a generalized contact structure ``independently of the choice of a contact form''. This Note fills that gap. We call the proposed structure a \emph{generalized contact bundle} to distinguish it from previously defined generalized contact structures. Generalized contact bundles are just a slight generalization of Iglesias-Wade integrable generalized almost contact structures \cite{IW2005} to the realm of (generically non-trivial) line bundles. Generalized contact bundles encompass (generically non-coorientable) contact structures and complex structures on the Atiyah algebroid of a line bundle as extreme cases. This new point of view on generalized contact geometry could  also be useful in studying $T$-duality \cite{AG2015}.

{In this Note, we interpret the defining equations of a generalized contact structure in terms of Lie algebroids and Lie groupoids. As a side result we define a novel notion of multiplicative Atiyah form on a Lie groupoid and identify its infinitesimal counterpart. This could have an independent interest.}

\section{The Atiyah algebroid  associated to a contact distribution} 
For a better understanding of the concept of a generalized contact bundle, we briefly discuss a line bundle approach to contact geometry. By definition,  a contact structure on an odd-dimensional manifold $M$ is a maximally non-integrable hyperplane distribution $H \subset TM$. In a dual way, any hyperplane distribution $H$ on $M$ can be regarded as a nowhere vanishing $1$-form $\theta : TM \to L$ {(its \emph{structure form})} with values in the line bundle $L =TM/ H$, {such that $H = \ker \theta$}. Now, consider the
 so called \emph{Atiyah algebroid} $DL \to M$ (also known as \emph{gauge algebroid}  \cite{Kosmann-Mackenzie}, \cite{Chen-Liu}) of  the line bundle $L$ \cite[Sections 2, 3]{V2015}. 
Recall that sections of $D L$ are \emph{derivations} of $L$, i.e.~$\R$-linear operators $\Delta : \Gamma (L) \to \Gamma (L)$ such that there exists a, necessarily unique, vector field $\sigma\Delta \in \frak X (M)$, called the \emph{symbol of $\Delta$}, such that $\Delta (f \lambda) = (\sigma\Delta)(f) \lambda + f \Delta (\lambda)$ for all $f \in C^\infty (M)$ and $\lambda \in \Gamma (L)$. This is a transitive Lie algebroid whose Lie bracket is the commutator, and whose anchor $D L \to TM$ is the symbol $\sigma$.  Additionally, $L$ carries a \emph{tautological representation} of $D L$ given by the action of an operator on a section. Any  $k$-cochain in the de Rham complex $(\Omega^\bullet_L := \Gamma (\wedge^\bullet (D L)^\ast \otimes L), d_{D L} )$ of $D L$ with coefficients in $L$ will be called an $L$-valued \emph{Atiyah $k$-form}.
{There is a one-to-one correspondence between contact structures $H$ with $TM/H = L$ and non-degenerate, $d_{DL}$-closed, $L$-valued Atiyah $2$-forms. Contact structure $H$, with structure form $\theta : TM \to L$, corresponds to the Atiyah $2$-form $\omega := d_{DL} \sigma^\ast \theta$, where $\sigma^\ast \theta (\Delta) := \theta (\sigma \Delta)$.}

 For more details on Atiyah forms as well as their functorial properties, see \cite[Section 3]{V2015}.

\section{Generalized contact bundles and contact-Hitchin pairs}

{Recall that a generalized almost complex structure on a manifold $M$ is an endomorphism $\Jj : \mathbb T M \to \mathbb TM$ of the \emph{generalized tangent bundle $\mathbb T M := TM \oplus T^\ast M$} such that (1) $\Jj^2 = - \operatorname{id}$, and (2) $\Jj$ is skew-symmetric with respect to the natural pairing on $\mathbb T M$. If, additionally, (3) the $\sqrt{-1}$-eigenbundle of $\Jj$ in the complexification $\mathbb TM \otimes \mathbb C$  in involutive relative to the Dorfman (equivalently, the Courant) bracket, then $\Jj$ is said to be \emph{integrable}, and  $(M, \mathcal J)$ is called a \emph{generalized complex manifold} (see \cite{G2011} for more details).}

Replacing the tangent algebroid with the Atiyah algebroid of a line bundle in the definition of a generalized complex manifold, we obtain the notion of \emph{generalized contact bundle}. More precisely, let $L \to M$ be a line bundle. For a vector bundle $V \to M$, there is an obvious $L$-valued (duality) pairing $\langle -, - \rangle_L : V \otimes (V^\ast \otimes L) \to L$, and for every vector bundle morphism $F: V \to W$ (covering the identity) there is an adjoint morphism $F^\dag: W^\ast \otimes L \to V^\ast \otimes L$ uniquely determined by $\langle F^\dag (\phi), v \rangle_L = \langle \phi , F(v) \rangle_L   $, $\phi \in W^\ast \otimes L$, $v \in V$. Clearly, $(D L)^\ast \otimes L = J^1 L$, the first jet bundle of $L$. The direct sum $\mathbb D L := D L \oplus J^1 L$ is a contact-Courant algebroid (in the sense of Grabowski \cite{G2013}), and is called an \emph{omni-Lie algebroid} in \cite{Chen-Liu}. We denote by $\blq -, - \brq : \Gamma (\mathbb D L) \times \Gamma (\mathbb D L) \to \Gamma (\mathbb D L)$ and $\bla - , - \bra : \mathbb D L \otimes \mathbb D L \to L$, the Dorfman-Jacobi bracket and the $L$-valued symmetric pairing, respectively. Namely, for all $\Delta, \nabla \in \Gamma (D L)$, $\phi, \psi \in \Gamma (J^1 L)$,
\[
\bla (\Delta , \phi), (\nabla, \psi) \bra := \langle \Delta ,\psi \rangle_L + \langle \nabla , \phi \rangle_L,
\]
and
\[
\blq (\Delta, \phi), (\nabla, \psi) \brq := \left([\Delta, \nabla], \Ll_\Delta \psi - i_\nabla d_{D L}  \phi \right).
\]
See e.g.~\cite{V2015} for the main properties of these structures.

\begin{definition}\label{def:almost}
A \emph{generalized almost contact bundle} is a line bundle $L \to M$ equipped with a \emph{generalized almost contact structure}, i.e.~an endomorphism $\Ii : \mathbb D L \to \mathbb D L$ such that $\Ii^2 = - \operatorname{id}$, and $\Ii^\dag = - \Ii$.
\end{definition}

\begin{remark}\label{rem:E}
Similarly as for generalized almost complex structures, it is easy to see that, in the case $L = M \times \R$, one recovers \cite[Definition 4.1]{IW2005} which is equivalent to Sekiya's generalized $f$-almost contact structures \cite{S2015}. In particular, Poon-Wade's generalized almost contact pairs are special cases of Definition \ref{def:almost}.
\end{remark}

Using direct sum decomposition $\mathbb D L = D L \oplus J^1 L$, one sees that every generalized almost contact structure on $L$ is of the form
\begin{equation}\label{eq:split}
\Ii = \left(
\begin{array}{cc}
\varphi & J^\sharp \\
\omega^\flat & -\varphi^\dag
\end{array}
\right)
\end{equation}
where

\noindent {\bf (i)} $\varphi : D L \to D L$ is a vector bundle endomorphism, 

\noindent {\bf (ii)} $J : \wedge^2 J^1 L \to L$ is a $2$-form with associated morphism $J^\sharp : J^1 L \to D L$, and

\noindent {\bf (iii)}  $\omega : \wedge^2 D L \to L$ is a $2$-form with associated morphism $\omega^\flat : D L \to J^1 L$,

\noindent satisfying the relations:
\begin{equation}\label{eq:alg}
\varphi J^\sharp = J^\sharp \varphi^\dag;  \quad
\varphi^2  = - \operatorname{id}- J^\sharp \omega^\flat;   \quad  {\rm and} \quad 
\omega^\flat \varphi  = \varphi^\dag \omega^\flat. 
\end{equation}
Conversely, every triple $(\varphi, J, \omega)$ as above determines a generalized almost contact structure via (\ref{eq:split}).
From  the third equation in (\ref{eq:alg}), putting $\omega_\varphi (\Delta, \nabla) := \omega (\varphi \Delta, \nabla)$,
we get a well defined Atiyah $2$-form $\omega_\varphi$. 
Following \cite{IW2005} we introduce the:

\begin{definition}
A generalized almost contact structure $\Ii$ on $L$ is \emph{integrable} if its \emph{Nijenhuis torsion} $\Nn_{\Ii} : \Gamma (\mathbb D L) \times \Gamma (\mathbb D L) \to \Gamma (\mathbb D L)$, defined by
$
\Nn_{\Ii} (\alpha, \beta) := \blq \Ii \alpha, \Ii \beta \brq - \blq  \alpha, \beta \brq - \Ii \blq \Ii \alpha , \beta \brq - \Ii \blq \alpha, \Ii \beta \brq
$,
vanishes identically. A \emph{generalized contact structure} is an integrable generalized almost contact structure. A \emph{generalized contact bundle} is a line bundle equipped with a generalized contact structure.
\end{definition}

Now, a section $J \in \Gamma (\wedge^2 (J^1 L)^\ast \otimes L)$ defines both a skew-symmetric bracket $\{-,-\}_J$ on $\Gamma (L)$ and a skew-symmetric bracket $[-,-]_J$ on $\Gamma (J^1 L)$ via
$$
\{ \lambda, \mu\}_J := J (j^1 \lambda, j^1 \mu) \quad {\rm and} \quad 
 [\phi, \psi]_J  := \Ll_{J^\sharp \phi} \psi - \Ll_{J^\sharp \psi} \phi - d_{D L}  J(\phi, \psi).$$
It is easy to see that $(L, \{-,-\}_J)$ is a Jacobi bundle {(see, e.g.~\cite{M1991})} if and only if  $(J^1L, [-,-]_J, \sigma J^\sharp)$ is a Lie algebroid (see, e.g.~\cite{CS2015,LOTV2014}), in this case we say that $J$ is a \emph{Jacobi structure on $L$}.

\begin{proposition}\label{prop:formulas}
Let $\Ii$ be a generalized almost contact structure on $L$. It is integrable if and only if, for all $\sigma, \tau \in \Gamma (J^1 L)$, $\Delta, \nabla, \square \in \Gamma (D L)$,
\begin{align}
J^\sharp [\phi, \psi]_J &= [J^\sharp \phi, J^\sharp \psi];   \label{eq:C1}\\
\varphi^\dag [\phi, \psi]_J &= \Ll_{J^\sharp \phi} \varphi^\dag \psi - \Ll_{J^\sharp \psi} \varphi^\dag \phi - d_{D L}  J (\varphi \phi, \psi); \label{eq:C2}\\
\Nn_\varphi (\Delta, \nabla) &= J^\sharp (i_\Delta i_\nabla d_{D L}  \omega); \label{eq:C3} 
\end{align}
and
\begin{equation}\label{eq:C4}
(d_{D L}  \omega_\varphi)(\Delta, \nabla, \square) = (d_{D L}  \omega) (\varphi\Delta, \nabla, \square) + (d_{D L}  \omega )(\Delta, \varphi\nabla, \square) + (d_{D L}  \omega) (\Delta, \nabla, \varphi\square), 
\end{equation}

\noindent where $\Nn_\varphi (\Delta, \nabla) := [\varphi \Delta, \varphi \nabla] + \varphi^2 [\Delta, \nabla] - \varphi[\varphi \Delta, \nabla] - \varphi [\Delta, \varphi \nabla]$.
\end{proposition}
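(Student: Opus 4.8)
The plan is to expand the Nijenhuis torsion $\Nn_\Ii$ on pairs of sections of the two summands $DL$ and $J^1L$, using the block form \eqref{eq:split} of $\Ii$ and the explicit formula for $\blq -,-\brq$, and then read off the $DL$- and $J^1L$-components separately. Concretely, I would evaluate $\Nn_\Ii(\alpha,\beta)$ on the three types of argument pairs $(\phi,\psi)$ with $\phi,\psi\in\Gamma(J^1L)$; $(\Delta,\phi)$ with $\Delta\in\Gamma(DL)$, $\phi\in\Gamma(J^1L)$; and $(\Delta,\nabla)$ with $\Delta,\nabla\in\Gamma(DL)$. Since $\Nn_\Ii$ is $C^\infty(M)$-bilinear (a standard check, exactly as for the usual Nijenhuis torsion of a generalized almost complex structure, using $\Ii^2=-\operatorname{id}$ and the derivation/Leibniz properties of the Dorfman--Jacobi bracket), it suffices to test on sections whose first jets, resp.\ which, span; this lets me replace $\phi$ by $j^1\lambda$, etc., where convenient. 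Each of the four displayed identities \eqref{eq:C1}--\eqref{eq:C4} will emerge as the vanishing of one homogeneous component of $\Nn_\Ii$ on one of these argument types.

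The key steps, in order: (1) On pairs $(\phi,\psi)\in\Gamma(J^1L)^{\times 2}$, compute the $DL$-component of $\Nn_\Ii(\phi,\psi)$; using $\Ii(\phi,0)=(J^\sharp\phi,-\varphi^\dag\phi)$ and the bracket formula, the $DL$-part collapses to $[J^\sharp\phi,J^\sharp\psi]-J^\sharp[\phi,\psi]_J$, giving \eqref{eq:C1}. (2) Still on $(\phi,\psi)$, the $J^1L$-component, after substituting \eqref{eq:C1} and using the definition of $[\phi,\psi]_J$ together with $\Ll_\Delta = i_\Delta d_{DL}+d_{DL}i_\Delta$ on Atiyah forms, reduces to \eqref{eq:C2}. (3) On pairs $(\Delta,\nabla)\in\Gamma(DL)^{\times 2}$, the $DL$-component yields $\Nn_\varphi(\Delta,\nabla)-J^\sharp(i_\Delta i_\nabla d_{DL}\omega)$, i.e.\ \eqref{eq:C3}, while the $J^1L$-component of the same pair gives \eqref{eq:C4} after rewriting $d_{DL}\omega_\varphi$ via the Cartan-type formula for $d_{DL}$ and the identity $\omega_\varphi(\Delta,\nabla)=\omega(\varphi\Delta,\nabla)$; here one uses the algebraic relations \eqref{eq:alg} to simplify the $\varphi^\dag\omega^\flat$ and $J^\sharp\omega^\flat$ terms. (4) Finally, check that the mixed pairs $(\Delta,\phi)$ produce no new conditions: their $DL$- and $J^1L$-components are consequences of \eqref{eq:C1}--\eqref{eq:C4} (this is the generalized-contact analogue of the well-known fact that, for generalized complex structures, the four ``pure-type'' components of the Nijenhuis tensor already imply the mixed ones). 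This last point, or equivalently the bookkeeping that shows our four identities are not merely necessary but also jointly sufficient, is where I would spend the most care.

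The main obstacle I anticipate is purely computational stamina rather than conceptual difficulty: the Dorfman--Jacobi bracket $\blq-,-\brq$ is not skew-symmetric and involves $\Ll_\Delta\psi - i_\nabla d_{DL}\phi$, so expanding $\blq\Ii\alpha,\Ii\beta\brq$ produces many terms, and one must repeatedly invoke \eqref{eq:alg}, the compatibility of $d_{DL}$ with the tautological representation, and the interplay of $j^1$ with $\Ll$ and $d_{DL}$ to achieve the stated collapses. A secondary subtlety is the non-tensoriality of individual terms: only the full combination $\Nn_\Ii$ is tensorial, so when I localize to $\phi = j^1\lambda$ I must make sure the intermediate expressions recombine correctly. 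I would organize the computation by first recording a short lemma listing the needed identities among $\Ll$, $d_{DL}$, $i_\Delta$, $j^1$, and $\sigma$ on Atiyah forms (all available from \cite{V2015}), and then plug the block form into $\Nn_\Ii$ once and for all, harvesting \eqref{eq:C1}--\eqref{eq:C4} from the four homogeneous components.
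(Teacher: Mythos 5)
Your plan is essentially the proof the paper intends: Proposition \ref{prop:formulas} is stated without an explicit proof, the argument being exactly the direct block-wise expansion of $\Nn_{\Ii}$ with respect to $\mathbb D L = DL \oplus J^1L$ on pure $\Gamma(J^1L)$-pairs and pure $\Gamma(DL)$-pairs (as in \cite{IW2005,C2011}), with the mixed pairs shown to be redundant using the algebraic relations (\ref{eq:alg}) and Cartan calculus for $d_{DL}$, $i_\Delta$, $\Ll_\Delta$. Your proposal is correct and follows this same route; the only practical caution is the sign conventions (e.g.\ the second relation in (\ref{eq:alg}) gives $J^\sharp \omega^\flat = -\operatorname{id} - \varphi^2$, so in the extreme case $\varphi=0$ one has $J^\sharp = -(\omega^\flat)^{-1}$ up to the chosen $\sharp/\flat$ conventions), which you already anticipate as the main source of bookkeeping errors.
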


Equations (\ref{eq:alg}) and (\ref{eq:C1})-(\ref{eq:C4}) should be seen as \emph{structure equations of a generalized contact structure}. 

\begin{example}
Let $\Ii$ be a generalized almost contact structure on $L \to M$ given by (\ref{eq:split}). As for generalized complex structures there are two extreme cases. the first one is when $\varphi = 0$, hence $J^\sharp = (\omega^\flat)^{-1}$, and $\Ii$ is completely determined by $\omega$ which is a non-degenerate Atiyah $2$-form. Now, $\Ii$ is integrable if and only if $d_{D L}  \omega = 0$, hence $\omega$ corresponds to a contact structure $H$ on $M$ such that $TM/H = L$. The second extreme case is when $J = \omega = 0$, hence $\varphi^2 = - \operatorname{id}$, i.e.~$\varphi$ is an almost complex structure on the Atiyah algebroid $D L$, and $\Ii$ is integrable if and only if $\varphi$ is a complex structure \cite{BR2010}.
\end{example}

Equation (\ref{eq:C1}) says that $J$ is a Jacobi structure. So every generalized contact bundle has an underlying Jacobi structure. 
Equation (\ref{eq:C2}) describes a compatibility condition between $J$ and $\varphi$. Equation (\ref{eq:C3})  measures the non-integrability of $\varphi$ while (\ref{eq:C4})  is a compatibility condition between $\varphi$ and $\omega$.

\begin{remark}
Consider the manifold $\widetilde M := L^\ast \smallsetminus \mathbf 0$ ($\mathbf 0$ being the image of the zero section). Recall that $\widetilde M$ is a principal $\mathbb R^\times$-bundle (and every principal $\mathbb R^\times$-bundle arise in this way). In particular $\widetilde M$ comes equipped with an homogeneity structure $h : [0, \infty)  \times M \to M$ in the sense of Grabowski (see, e.g.~\cite{BGG2015}). The fundamental vector field corresponding to the canonical generator $1$ in the Lie algebra $\mathbb R$ of $\mathbb R^\times$ will be denoted by $\mathcal E$.
\begin{proposition}\label{claim:hom_gcs}
Generalized contact structures on $L$ are in one-to-one correspondence with \emph{homogeneous generalized complex structures} on $\widetilde M$.
\end{proposition}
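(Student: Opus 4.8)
The plan is to reduce the statement to the now-standard \emph{homogenization} correspondence between line-bundle geometry over $M$ and $\mathbb{R}^\times$-equivariant geometry over $\widetilde M$, of which the present proposition is the generalized-geometry avatar of the symplectization of a contact manifold (and of the homogeneous-Poisson picture of a Jacobi manifold). Write $\pi\colon\widetilde M\to M$ for the bundle projection. First I would recall (see, e.g., \cite{V2015,Chen-Liu}) that homogenization identifies $\Gamma(L)$ with the degree-one (fibrewise-linear) functions on $\widetilde M$, $\Gamma(DL)$ with the degree-zero---equivalently $\mathbb{R}^\times$-invariant---vector fields on $\widetilde M$ (a derivation $\Delta$ corresponding to the unique vector field $X_\Delta$ with $X_\Delta(\hat\lambda)=\widehat{\Delta\lambda}$), $\Gamma(J^1L)$ with the degree-one $1$-forms on $\widetilde M$, and, more generally, $L$-valued Atiyah forms with degree-one forms, intertwining $d_{D L}$ with the de Rham differential $d$. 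Summing the first two identifications, $\mathbb{D}L=DL\oplus J^1L$ gets identified, $\mathbb{R}^\times$-equivariantly, with the generalized tangent bundle $\mathbb{T}\widetilde M=T\widetilde M\oplus T^\ast\widetilde M$, in such a way that the Dorfman--Jacobi bracket $\blq-,-\brq$ goes to the Dorfman bracket of $\mathbb{T}\widetilde M$ and the $L$-valued pairing $\bla-,-\bra$ goes to the canonical symmetric pairing (now valued in degree-one functions, i.e., in $\Gamma(L)$). Checking that these brackets and pairings really match---equivalently, that the omni-Lie algebroid of $L$ \emph{is} the standard Courant algebroid of $\widetilde M$ read equivariantly---I would isolate as a preliminary lemma.

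Granting this dictionary, the correspondence at the ``almost'' level is essentially formal. Given a generalized almost contact structure $\Ii$ on $L$, transport it along the identification to an endomorphism of the homogeneous part of $\mathbb{T}\widetilde M$; since $\Ii$ acts $C^\infty(M)$-linearly and the homogeneous sections generate $\Gamma(\mathbb{T}\widetilde M)$ over $C^\infty(\widetilde M)$, this extends uniquely to a bundle endomorphism $\Jj$ of $\mathbb{T}\widetilde M$, which is $\mathbb{R}^\times$-invariant---i.e., homogeneous---by construction. The pointwise algebraic identities $\Jj^2=-\operatorname{id}$ and $\Jj^\dag=-\Jj$ hold because the corresponding identities hold for $\Ii$ and the identification preserves the fibrewise linear structure and the pairing; hence $\Jj$ is a homogeneous generalized almost complex structure. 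Conversely, a homogeneous generalized almost complex structure on $\widetilde M$ preserves the homogeneous sections and restricts there to a $C^\infty(M)$-linear endomorphism (the invariant functions being exactly $C^\infty(M)$), hence descends to a generalized almost contact structure on $L$. These two assignments are manifestly mutually inverse.

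It remains to match the integrability conditions. Here I would use that the Nijenhuis torsion $\Nn_\Ii$ (respectively $\Nn_\Jj$), although built from the non-skew Dorfman(-Jacobi) bracket, is $C^\infty$-bilinear---a genuine tensor---precisely because $\Ii$ (respectively $\Jj$) is skew-symmetric and squares to $-\operatorname{id}$, the anchor- and pairing-dependent terms of the Leibniz rule cancelling just as for generalized almost complex structures (cf.~\cite{G2011}). Since under the homogenization dictionary $\blq-,-\brq$ goes to the Dorfman bracket and $\Ii$ goes to $\Jj$, the tensor $\Nn_\Ii$ goes to $\Nn_\Jj$; and as $\pi$ is a surjective submersion, a tensor on the $L$-side vanishes identically iff its counterpart on $\widetilde M$ does. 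Therefore $\Ii$ is integrable iff $\Jj$ is, which proves the proposition. (Equivalently, one may argue via $\sqrt{-1}$-eigenbundles: the $\sqrt{-1}$-eigenbundle $E\subset\mathbb{D}L\otimes\mathbb{C}$ of $\Ii$ is maximal isotropic with $E\cap\overline{E}=0$, its homogenization $E_\Jj\subset\mathbb{T}\widetilde M\otimes\mathbb{C}$ is the $\sqrt{-1}$-eigenbundle of $\Jj$, and $E$ is involutive for $\blq-,-\brq$ iff $E_\Jj$ is involutive for the Dorfman bracket.)

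The step I expect to be the real obstacle is the bookkeeping in the first paragraph: one must keep track of the fact that it is the degree-\emph{one} (not the invariant) $1$-forms on $\widetilde M$ that correspond to $\Gamma(J^1L)$---so that the $\mathbb{R}^\times$-action on the $T^\ast\widetilde M$ summand of $\mathbb{T}\widetilde M$ carries a weight shift, and ``homogeneous generalized complex structure'' has to be understood for exactly this weighting---and one must verify, once and for all, that $\blq-,-\brq$ and $\bla-,-\bra$ are carried to the standard Dorfman bracket and symmetric pairing of $\mathbb{T}\widetilde M$. Once that is in place, the extension/descent of $\Ii\leftrightarrow\Jj$ and the transfer of integrability are routine.
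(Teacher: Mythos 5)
Your proposal is correct and follows essentially the same route as the paper: the homogenization dictionary identifying $\Gamma(L)$, $DL$, $J^1L$, Atiyah forms and the omni-Lie algebroid structure of $\mathbb D L$ with the appropriately weighted homogeneous objects on $\widetilde M$, which is exactly what the paper delegates to \cite[Theorem A.4]{V2015} after phrasing homogeneity of $\Jj$ through the triple $(a,\pi,\sigma)$ (degrees $0$, $-1$, $1$), equivalently through $h_{\mathbb T}$-equivariance — the same weight conventions you flag as the delicate point. Your explicit extension/descent of $\Ii\leftrightarrow\Jj$ and the transfer of integrability via tensoriality of the Nijenhuis torsion (or eigenbundles) just spells out what the cited theorem packages.
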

Let $\Jj : \mathbb T \widetilde M \to \mathbb T \widetilde M$ be a generalized complex structures. Using decomposition $\mathbb T \widetilde M = T \widetilde M \oplus T^\ast \widetilde M$ we see that $\Jj$ is the same as a triple $(a, \pi, \sigma)$ where $a$ is an endomorphism of $T \widetilde M$, $\pi$ is a bi-vector field, and $\sigma$ is a $2$-form on $\widetilde M$ satisfying suitable identities \cite{C2011}. We say that $\Jj$ is \emph{homogeneous} if 1) $a$ is homogeneous of degree $0$, i.e.~$\mathcal L_{\mathcal E} a = 0$, 2) $\pi$ is homogeneous of degree $-1$, i.e.~$\mathcal L_{\mathcal E} \pi = - \pi$, and 3) $\omega$ is homogeneous of degree $1$, i.e.~$\mathcal L_{\mathcal E} \sigma = \sigma$. Now, Proposition \ref{claim:hom_gcs} is a straightforward consequence of \cite[Theorem A.4]{V2015}. There is an alternative elegant way of  explaining the homogeneity of $\Jj$. Namely, homogeneity structure $h$ lifts to an homogeneity structure $h_{\mathbb T}$  on the generalized tangent bundle, namely the direct sum of the \emph{tangent} and the \emph{phase lifts} of $h$ \cite[Section 2]{BGG2015}. It is easy to check that $\Jj$ is homogeneous in the above sense iff it is equivariant with respect to $h_{\mathbb T}$ (see also \cite[Theorem 2.3]{BGG2015}). 
\end{remark}

It is useful to characterize those generalized contact structures such that $J$ is non-degenerate. In this case there is a unique non-degenerate Atiyah $2$-form $\omega_J$, also denoted $J^{-1}$, such that $J^\sharp \omega_J^\flat = \operatorname{id}$ and (\ref{eq:C1}) says that $\omega_J$ is $d_{D L} $-closed. Hence it comes from a contact structure $H_J \subset TM$ such that $TM/H_J = L$. Following Crainic \cite{C2011} we introduce the following notion:

\begin{definition}
A \emph{contact-Hitchin pair} on a line bundle $L \to M$ is a pair $(H, \Phi)$ consisting of a contact structure $H \subset TM$ with $TM/H = L$, and an endomorphism $\Phi : D L \to D L$ such that
(i) $\Omega^\flat \Phi = \Phi^\dag \Omega^\flat$ (so that the Atiyah $2$-form $\Omega_\Phi$ is well-defined), and
(ii) $d_{D L}  \Omega_\Phi = 0$,
where $\Omega$ is the Atiyah $2$-form corresponding to $H$, i.e.~$\Omega := d_{D L}  \sigma^\ast \theta$, and $\theta : TM \to L$ is the structure form of $H$, i.e.~$H = \ker \theta$.
\end{definition}

\begin{proposition}\label{prop:Hitchin_pair}
There is a one-to-one correspondence between generalized contact structures on $L$ given by (\ref{eq:split}), with $J$ non-degenerate, and contact Hitchin pairs $(H, \Phi)$ on $L$. In this correspondence $H$ is the contact structure corresponding to $\omega_J = J^{-1}$, and moreover:
 $$\quad \Phi = \varphi \quad  {\rm and}  \quad \omega = - (\omega_J + \varphi^\ast \omega_J), \quad {\rm where} \quad (\varphi^\ast \omega_J) (\Delta, \nabla) := \omega_J (\varphi \Delta, \varphi\nabla).$$
\end{proposition}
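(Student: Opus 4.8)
The plan is to split the statement into a purely algebraic bijection of bundle endomorphisms and, on top of it, a matching of the two integrability conditions.

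First I would establish the \emph{algebraic correspondence}. Start with a generalized almost contact structure $\Ii$ written as in (\ref{eq:split}) with $J$ non-degenerate, and set $\Omega := \omega_J = J^{-1}$, so that $\omega_J^\flat = (J^\sharp)^{-1}$. I would show that the algebraic relations (\ref{eq:alg}) are equivalent to: (a) $\omega_J^\flat\varphi = \varphi^\dag\omega_J^\flat$, i.e.\ condition (i) in the definition of a contact-Hitchin pair with $\Phi := \varphi$; and (b) $\omega = -(\omega_J + \varphi^\ast\omega_J)$. Indeed, composing the first identity in (\ref{eq:alg}) with $\omega_J^\flat$ on both sides yields (a) and conversely; granting (a), the second identity rewrites as $\omega^\flat = -\omega_J^\flat - \omega_J^\flat\varphi^2 = -\omega_J^\flat - \varphi^\dag\omega_J^\flat\varphi = -(\omega_J + \varphi^\ast\omega_J)^\flat$, which is (b) and conversely; and a one-line manipulation with (a) and (b) shows the third identity in (\ref{eq:alg}) to be then automatic. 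Reading this backwards, to a contact-Hitchin pair $(H,\Phi)$ with associated non-degenerate closed Atiyah $2$-form $\Omega$ one attaches $J := \Omega^{-1}$, $\varphi := \Phi$, $\omega := -(\Omega + \Phi^\ast\Omega)$, and the resulting $\Ii$ via (\ref{eq:split}) satisfies (\ref{eq:alg}); the two assignments are manifestly inverse to each other and produce exactly the formulas displayed in the statement.

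Next I would match integrability. Under this bijection, $\Ii$ is integrable --- equations (\ref{eq:C1})--(\ref{eq:C4}) --- if and only if $\Omega = \omega_J$ is $d_{DL}$-closed (equivalently $H_J$ is a genuine contact structure, so that $(H_J,\varphi)$ is a legitimate pair) and $d_{DL}\Omega_\varphi = 0$, which is condition (ii). The closedness of $\omega_J$ is exactly the content of (\ref{eq:C1}) when $J$ is non-degenerate, as recorded above; this also disposes of the requirement, built into the notion of contact-Hitchin pair, that $H_J$ be a contact structure.

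The core of the proof is then to show that, granting $d_{DL}\omega_J = 0$, the three equations (\ref{eq:C2}), (\ref{eq:C3}), (\ref{eq:C4}) are \emph{jointly} equivalent to the single equation $d_{DL}\Omega_\varphi = 0$, where $\Omega_\varphi(\Delta,\nabla) = \omega_J(\varphi\Delta,\nabla)$. The strategy is to transport the ``$J^1L$-side'' data to the Atiyah side through the isomorphism $\omega_J^\flat : DL \xrightarrow{\ \sim\ } J^1L$: substituting $\phi = \omega_J^\flat\Delta$, $\psi = \omega_J^\flat\nabla$ and using $d_{DL}\omega_J = 0$ intertwines $[-,-]_J$ with the commutator on $DL$ and converts (\ref{eq:C2}) into a differential identity for $\varphi$ and $\omega_J$; feeding $\omega = -(\omega_J + \varphi^\ast\omega_J)$ into (\ref{eq:C3}) and (\ref{eq:C4}), and repeatedly invoking the algebraic relations (\ref{eq:alg}) together with the Cartan calculus for $d_{DL}$ on Atiyah forms, one collapses the whole system to $d_{DL}\Omega_\varphi = 0$. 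This bookkeeping is the only substantial step, and the place where signs and the twisting of $d_{DL}$ must be tracked carefully; it is the $L$-twisted, Atiyah-algebroid counterpart of Crainic's identification of generalized complex structures with non-degenerate Poisson part with Hitchin pairs \cite{C2011}. A more economical route is to invoke Proposition \ref{claim:hom_gcs}: such an $\Ii$ corresponds to a homogeneous generalized complex structure on $\widetilde M = L^\ast\smallsetminus\mathbf 0$ with non-degenerate bivector, hence to a Hitchin pair $(\gamma,N)$ on $\widetilde M$ in the sense of \cite{C2011}; homogeneity of degree $1$ makes $\gamma$ the pullback of a contact structure $H$ with $TM/H = L$, homogeneity of degree $0$ makes $N$ the pullback of some $\Phi : DL \to DL$, $\gamma_N$ descends to $\Omega_\Phi$, and the formula for $\omega$ falls out of \cite[Theorem A.4]{V2015}. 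Either way, the expected main obstacle is the block-by-block reduction of the Nijenhuis torsion, making (\ref{eq:C2})--(\ref{eq:C4}) visibly equivalent to closedness of the single Atiyah $2$-form $\Omega_\varphi$.
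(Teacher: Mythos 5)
Your proposal is correct and follows essentially the route the paper intends: the paper gives no details and simply says the proof is analogous to Proposition 2.6 of \cite{C2011}, which is exactly the adaptation you carry out (the algebraic equivalence of (\ref{eq:alg}) with $\Omega^\flat\Phi=\Phi^\dag\Omega^\flat$ and $\omega=-(\omega_J+\varphi^\ast\omega_J)$ checks out, and the reduction of (\ref{eq:C2})--(\ref{eq:C4}) to $d_{DL}\Omega_\varphi=0$ given $d_{DL}\omega_J=0$ is the same collapse as in Crainic's argument). Your alternative via Proposition \ref{claim:hom_gcs} and homogenization on $\widetilde M$ is a legitimate shortcut consistent with the paper's remark, but not needed.
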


The proof of Proposition \ref{prop:Hitchin_pair} is similar to that of Proposition 2.6 in \cite{C2011}.

\section{Multiplicative Atiyah forms on Lie groupoids and generalized contact structures}

{As we have seen above,} every generalized contact bundle $(L, \Ii)$ has an underlying Jacobi structure $J$. Jacobi structures are the infinitesimal counterparts of multiplicative contact structures on Lie groupoids \cite{CS2015} (see also \cite{KS1993} for the case $L = M \times \R$). So, it is natural to ask: \emph{are the remaining components $\varphi, \omega$ of $\Ii$ also infinitesimal counterparts of suitable (multiplicative) structures on $\Gg$?} Theorem \ref{theor:Hitchin1} below answers this question (see \cite[Theorems 3.2, 3.3, 3.4]{C2011} for the generalized complex case). In order to state it, we need to introduce a novel notion of \emph{multiplicative Atiyah form}.

Multiplicative forms and their infinitesimal counterparts are extensively studied in \cite{BC2012}. Vector-bundle valued differential forms and their infinitesimal counterparts, Spencer operators, are studied in \cite{CSS2015}. 
In what follows, we outline a similar theory for Atiyah forms. 

Given a Lie groupoid $\Gg \rightrightarrows M$, we will denote the source by $s$, the target by $t$ and the unit by $u$. Moreover, we identify $M$ with its image under $u$. Denote by $\Gg_2 = \{ (g_1, g_2) \in \Gg \times \Gg: s(g_1) = t (g_2)\}$ the manifold of composable arrows and let $m : \Gg_2 \to \Gg$, $(g_1, g_2) \mapsto g_1g_2$ be the multiplication. We denote by $\pr_1, \pr_2 : \Gg_2 \to \Gg$ the projections onto the first and second factor respectively.

Recall that the Lie algebroid $A$ of $\Gg$ consists of tangent vectors to the source fibers at points of $M$. Every section $\alpha$ of $A$ corresponds to a unique right invariant, $s$-vertical vector field $\alpha^r$ on $\Gg$ such that $\alpha = \alpha^r |_M$. Now let $E \to M$ be a vector bundle carrying a representation of $\Gg$. {Thus,  there is a flat $A$-connection $\nabla$ in $E$.} As shown in \cite[Proposition 10.1]{V2015}, there is a canonical flat $\ker ds$-connection $\nabla^{\Gg} : \ker ds \to D (t^\ast E)$ in the pull-back bundle $t^\ast E$ such that
$
\nabla_\alpha = t_\ast (\nabla^\Gg_{\alpha^r}|_M)
$
for all $\alpha \in \Gamma (A)$.
Additionally, there is a natural vector bundle isomorphism (covering the identity)
$
\mathsf{i} : (t \circ \pr_1)^\ast E \longrightarrow (t \circ \pr_2)^\ast E
$
defined as follows. For $((g_1,g_2), e) \in (t \circ \pr_1)^\ast E$, $e \in E_{t(g_1)}$ put
$
\mathsf{i} ((g_1,g_2), e) := ((g_1,g_2), g_1^{-1} \cdot e) \in (t \circ \pr_2)^\ast E.
$
In the following $E = L$ is a line bundle. In particular, $(t \circ \pr_2)^\ast L$-valued Atiyah forms can be pulled-back to $(t \circ \pr_1)^\ast L$-valued Atiyah forms along $\mathsf{i}$.

\begin{definition}
An Atiyah form $\omega \in \Omega^\bullet_{t^\ast L}$ is \emph{multiplicative} if
$
m^\ast \omega = \pr_1^\ast \omega + \mathsf{i}^\ast \pr_2^\ast \omega.
$
\end{definition}

We also need the following:

\begin{definition}
An endomorphism $\Phi : D (t^\ast L) \to D (t^\ast L)$ is \emph{multiplicative} if, for every $\square \in D (m^\ast t^\ast L)$, there is a, necessarily unique, $\Phi D \in D (m^\ast t^\ast L)$ such that (1) $\pr_{1\ast} \Phi \square = \Phi \pr_{1\ast} \square$, (2) $\pr_{2\ast} \mathsf{i}_\ast \Phi \square = \Phi \pr_{2\ast} \mathsf{i}_\ast \square$ and (3) $m_\ast \Phi \square = \Phi m_\ast \square$.
\end{definition}

The following definition and Theorem \ref{theor:Hitchin1} provide the infinitesimal counterpart of multiplicative Atiyah forms. Let $L \to M$ be a line bundle carrying a representation of a Lie algebroid $A \to M$. 

\begin{definition}\label{def:mult_Atiyah}
An \emph{$L$-valued infinitesimal multiplicative (IM) Atiyah $k$-form on $A$} is a pair $(\boldsymbol \Dd, \boldsymbol l)$, where 
$
\boldsymbol \Dd : \Gamma (A) \longrightarrow \Omega^k_L
$
is a first order differential operator, and 
$
\boldsymbol l : A \longrightarrow \wedge^{k-1} (D L)^\ast \otimes L
$
is a vector bundle morphism such that, for all $\alpha, \beta \in \Gamma (A)$, and $f \in C^\infty (M)$, 
\[
\boldsymbol \Dd (f \alpha)  = f \boldsymbol \Dd (\alpha) + d_{D L}  f \wedge \boldsymbol l (\alpha),
\]
and
\[
\begin{aligned}
  \Ll_{\nabla_\alpha} \boldsymbol \Dd(\beta) - \Ll_{\nabla_\beta} \boldsymbol \Dd(\alpha) & = \boldsymbol \Dd([\alpha, \beta]) ;   \\
\Ll_{\nabla_\alpha} \boldsymbol l (\beta) - i_{\nabla_\beta} \boldsymbol \Dd(\alpha)  &  = \boldsymbol l ([\alpha, \beta]) ;  \\
i_{\nabla_\alpha} \boldsymbol l (\beta) + i_{\nabla_\beta} \boldsymbol l (\alpha)& =0.
 \end{aligned}
 \]
\end{definition}

Now on, $L \to M$ is a line bundle carrying a representation of a source simply connected Lie groupoid $\mathcal G \rightrightarrows M$, and $A$ is the Lie algebroid of $\mathcal G$.

\begin{theorem}\label{theor:Hitchin1_bis}
There is a one-to-one correspondence between $t^\ast L$-valued multiplicative Atiyah $k$-forms $\omega$ and $L$-valued IM Atiyah $k$-forms $(\boldsymbol \Dd, \boldsymbol l)$ on $A$. In this correspondence
\begin{equation*}
\boldsymbol \Dd(\alpha) =  u^\ast (\Ll_{\nabla^\Gg_{\alpha^r}} \omega)\quad {\rm and} \quad 
\boldsymbol l (\alpha)  = u^\ast (i_{\nabla^\Gg_{\alpha^r}} \omega) .
\end{equation*}
\end{theorem}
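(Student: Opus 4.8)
The plan is to follow the template of the classical correspondence between multiplicative forms and IM forms (as in \cite{BC2012,CSS2015}), adapting every step to the Atiyah setting, where ordinary differential forms are replaced by Atiyah forms, vector fields by derivations, and the pull-back bundles $t^\ast L$, $m^\ast t^\ast L$ carry the canonical connections $\nabla^\Gg$. First I would verify that the two formulas $\boldsymbol \Dd(\alpha) = u^\ast(\Ll_{\nabla^\Gg_{\alpha^r}}\omega)$ and $\boldsymbol l(\alpha) = u^\ast(i_{\nabla^\Gg_{\alpha^r}}\omega)$ genuinely produce an IM Atiyah form, i.e.\ that $(\boldsymbol\Dd,\boldsymbol l)$ satisfies the Leibniz-type rule and the three compatibility equations of Definition \ref{def:mult_Atiyah}. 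The Leibniz rule $\boldsymbol\Dd(f\alpha) = f\boldsymbol\Dd(\alpha) + d_{DL}f\wedge\boldsymbol l(\alpha)$ follows from $(f\alpha)^r = (t^\ast f)\,\alpha^r$ together with the Cartan-type identity $\Ll_{(t^\ast f)\nabla^\Gg_{\alpha^r}}\omega = (t^\ast f)\Ll_{\nabla^\Gg_{\alpha^r}}\omega + d_{D(t^\ast L)}(t^\ast f)\wedge i_{\nabla^\Gg_{\alpha^r}}\omega$ for Atiyah forms, restricted along $u$ (and using $u^\ast t^\ast f = f$). The first two compatibility equations come from the bracket relation $[\alpha^r,\beta^r] = [\alpha,\beta]^r$, the flatness of $\nabla^\Gg$ on $\ker ds$, and the graded-commutator identities $[\Ll,\Ll]=\Ll_{[\,,\,]}$, $[\Ll,i]=i_{[\,,\,]}$ among the Cartan operators on $\Omega^\bullet_{t^\ast L}$; the third is immediate since $i_{\nabla^\Gg_{\alpha^r}}i_{\nabla^\Gg_{\beta^r}}\omega$ is skew in $\alpha,\beta$. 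This gives a well-defined map $\omega\mapsto(\boldsymbol\Dd,\boldsymbol l)$ from multiplicative Atiyah forms to IM Atiyah forms.

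Next I would construct the inverse. Given an IM pair $(\boldsymbol\Dd,\boldsymbol l)$, the goal is to reconstruct $\omega\in\Omega^k_{t^\ast L}$ with the stated restrictions of $\Ll_{\nabla^\Gg_{\alpha^r}}\omega$ and $i_{\nabla^\Gg_{\alpha^r}}\omega$ along $u$. Following the Bursztyn--Cabrera strategy, I would define $\omega$ pointwise at an arrow $g\in\Gg$ by integrating the ``infinitesimal data'' along an $s$-fiber path from $u(s(g))$ to $g$: concretely, one writes $\omega_g$ in terms of $\int$ of expressions built from $\boldsymbol\Dd$, $\boldsymbol l$, and the flow of right-invariant derivations, and then shows (i) the result is independent of the chosen path — here is where source simple connectedness of $\Gg$ is used, via the compatibility equations which guarantee that the relevant ``curvature'' of the reconstruction 2-form vanishes — and (ii) $\omega$ is smooth, its restriction along $u$ recovers $(\boldsymbol\Dd,\boldsymbol l)$, and it is multiplicative, $m^\ast\omega = \pr_1^\ast\omega + \mathsf{i}^\ast\pr_2^\ast\omega$. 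Multiplicativity is checked by differentiating both sides along the right-invariant derivations $\alpha^r$ lifted appropriately to $\Gg_2$ (using the compatibility of $\mathsf{i}$ with the groupoid multiplication and with $\nabla^\Gg$, as encoded in \cite[Proposition 10.1]{V2015}) and invoking again that $s$-simply connected groupoids are determined by such infinitesimal identities; the base case of the induction along $s$-fibers is the hypothesis that $\omega$ vanishes suitably on $M=u(M)$, which is forced because at units the two sides of the multiplicativity equation collapse.

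Finally I would check that the two assignments $\omega\mapsto(\boldsymbol\Dd,\boldsymbol l)$ and $(\boldsymbol\Dd,\boldsymbol l)\mapsto\omega$ are mutually inverse: one direction is immediate from the construction of the inverse (it is engineered to restrict correctly), and the other direction follows from a uniqueness statement — a multiplicative Atiyah form is determined by its ``first-order data'' $u^\ast\Ll_{\nabla^\Gg_{\alpha^r}}\omega$ and $u^\ast i_{\nabla^\Gg_{\alpha^r}}\omega$, because two multiplicative forms with the same such data have difference that is multiplicative and flat along $s$-fibers with vanishing restriction to units, hence zero by source connectedness. I expect the main obstacle to be the careful construction of the inverse map and the verification that its output is globally well-defined and multiplicative: this is the place where one must transport the integration-along-source-fibers argument of \cite{BC2012} into the Atiyah-form language, keeping track of the non-trivial line bundle coefficients and of the canonical connection $\nabla^\Gg$ at every stage, and it is also the only place where the source simply connectedness hypothesis is genuinely needed. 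The purely algebraic part — that the explicit formulas give an IM form — is routine, reducing to the standard graded-commutator identities among Cartan operators on the Atiyah--de Rham complex.
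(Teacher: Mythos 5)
Your strategy is viable but it is genuinely different from, and much heavier than, what the paper does. You propose to re-run the Bursztyn--Cabrera/Crainic--Salazar--Struchiner integration scheme from scratch in the Atiyah category: check the explicit formulas define an IM Atiyah form, then reconstruct $\omega$ at each arrow by integrating the infinitesimal data along source-fiber paths, proving path-independence, smoothness and multiplicativity via source simple connectedness. That programme can be carried out, but everything you flag as ``the main obstacle'' is precisely the hard technical content, and your sketch leaves it at the level of a plan. The paper avoids this entirely by a reduction you did not consider: every Atiyah form splits canonically as $\omega = \sigma^\ast \omega_0 + d_{DL}\,\sigma^\ast \omega_1$ with $\omega_0, \omega_1$ ordinary $t^\ast L$-valued differential forms on $\Gg$, giving $\Omega^\bullet_{t^\ast L} \cong \Omega^\bullet (\Gg, t^\ast L) \oplus \Omega^\bullet (\Gg, t^\ast L)[1]$; one checks that $\omega$ is multiplicative iff both components are, and that an IM Atiyah form $(\boldsymbol \Dd, \boldsymbol l)$ likewise decomposes into a pair of Spencer operators on $A$. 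The statement then follows at once from the already established correspondence between multiplicative vector-bundle-valued forms and Spencer operators (\cite[Theorem 1]{CSS2015}), with the integration/reconstruction argument and the use of source simple connectedness entirely outsourced to that reference. What your approach buys is self-containedness and formulas intrinsic to the Atiyah complex; what the paper's approach buys is a short, rigorous proof in which the only new verifications are algebraic compatibilities of the splitting with multiplicativity and with the IM equations. If you keep your route, you must actually supply the reconstruction and uniqueness arguments with line-bundle coefficients and the connection $\nabla^\Gg$ tracked throughout; otherwise the cleaner move is to prove the decomposition lemma and cite \cite{CSS2015} as the paper does.
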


\begin{proof}
There is a direct sum decomposition $\Omega^\bullet_{t^\ast L} = \Omega^\bullet (\Gg, t^\ast L) \oplus \Omega^\bullet (\Gg, t^\ast L)[1]$ given by $\omega \equiv (\omega_0, \omega_1)$, with $\omega = \sigma^\ast \omega_0 + d_{DL} \sigma^\ast \omega_1$, and $\omega$ is multiplicative if and only if $(\omega_0, \omega_1)$ are so. Using \cite[Theorem 1]{CSS2015}, we see that $(\omega_0, \omega_1)$ correspond to Spencer operators on $A$ \cite[Definition 2.6]{CSS2015}. Finally check that, similarly as for Atiyah forms, IM Atiyah forms decompose canonically into a direct sum of Spencer operators.
\end{proof}

\begin{remark}\label{rem:Hitchin}
Let $H \subset T\Gg$ be a multiplicative contact structure on $\Gg$, with $T\Gg/ H = t^\ast L$ and let $\Omega$ be the corresponding Atiyah $2$-form. When specialized to $\Omega$, Theorem \ref{theor:Hitchin1_bis} gives an isomorphism $\boldsymbol l : A \to J^1 L$ of $A$ with the Lie algebroid $(J^1 L, [-,-]_J, \sigma J^\sharp)$ corresponding to a unique Jacobi structure $J$ on $L$.
\end{remark}

\begin{definition}
A \emph{contact-Hitchin groupoid} is a Lie groupoid $\Gg \rightrightarrows M$ together with 

\noindent (1) a line bundle $L \to M$ carrying a representation of $\Gg$,

\noindent (2) a multiplicative contact-Hitchin pair $(H, \Phi)$ on $t^\ast L$, i.e.~both $H$ and $\Phi$ are multiplicative, and

\noindent (3) an $L$-valued Atiyah $2$-form $\omega$ on $M$ such that $\Omega + \Phi^\ast \Omega = s^\ast \omega - t^\ast \omega$,

\noindent where $\Omega$ is the Atiyah $2$-form corresponding to $H$.
\end{definition}

\begin{theorem}\label{theor:Hitchin1}
There is a one-to-one correspondence between contact-Hitchin groupoid structures $(H, \Phi, \Omega)$ on $\Gg$ and triples $(J, \varphi, \omega)$ satisfying Equations (\ref{eq:C1})-(\ref{eq:C3}),  and the first two equations in (\ref{eq:alg}). In this correspondence, $J$ is the Jacobi structure corresponding to $H$ (Remark \ref{rem:Hitchin}), and $\varphi : DL \to DL$ is the (well-defined) restriction of $\Phi$ to $D L$.
\end{theorem}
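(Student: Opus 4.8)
The plan is to reduce Theorem \ref{theor:Hitchin1} to Theorem \ref{theor:Hitchin1_bis} together with Remark \ref{rem:Hitchin} and the known integration theory for multiplicative contact structures and Jacobi bundles \cite{CS2015}. The three data of a contact-Hitchin groupoid are, respectively: a multiplicative contact structure $H$ on $t^\ast L$; a multiplicative endomorphism $\Phi$ of $D(t^\ast L)$ compatible with $H$ in the contact-Hitchin sense; and an Atiyah $2$-form $\omega$ on $M$ witnessing that $\Omega + \Phi^\ast \Omega$ is, up to the sign convention, coboundary-like across $s$ and $t$. I would match these one by one with the components of a triple $(J, \varphi, \omega)$. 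By Remark \ref{rem:Hitchin}, source-simply-connectedness of $\Gg$ plus the integration of Jacobi structures gives that multiplicative contact structures $H$ on $t^\ast L$ are in bijection with Jacobi structures $J$ on $L$; this handles the first datum and, simultaneously, yields Equation (\ref{eq:C1}) (which, as noted after Proposition \ref{prop:formulas}, is precisely the statement that $J$ is a Jacobi structure).

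Next I would differentiate $\Phi$. The restriction $\varphi := \Phi|_{DL}$ is well defined because a multiplicative endomorphism of $D(t^\ast L)$ preserves, in particular, the units and hence restricts along $M \hookrightarrow \Gg$; here one uses that $DL$ sits inside $D(t^\ast L)$ over $M$ via $u$, and that condition (3) in the definition of a multiplicative endomorphism forces compatibility with $m_\ast$, so the restriction is an honest endomorphism of $DL$, not merely a linear map on fibers. The content is then that a multiplicative $\Phi$ compatible with a multiplicative $H$ integrates exactly a $\varphi$ satisfying the remaining equations. Concretely: condition (i) of a contact-Hitchin pair, $\Omega^\flat \Phi = \Phi^\dag \Omega^\flat$, is a pointwise (multiplicative) identity whose infinitesimal shadow at $M$ is the compatibility of $\varphi$ with the Atiyah $2$-form $\omega_J = J^{-1}$; combined with the Jacobi algebroid structure on $J^1 L$ this is precisely Equation (\ref{eq:C2}). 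Integrability of $\Phi$ as a genuine (not merely almost) structure on the groupoid side — i.e.\ $d_{D(t^\ast L)} \Omega_\Phi = 0$ — should infinitesimalize, via Theorem \ref{theor:Hitchin1_bis} applied to the IM Atiyah $2$-form attached to $\Omega_\Phi$, to the $\varphi$-Nijenhuis identity, i.e.\ Equation (\ref{eq:C3}): the Nijenhuis torsion $\Nn_\varphi$ appears as the obstruction to $\Phi$ being multiplicatively closed, exactly as in the generalized-complex case treated in \cite[Theorems 3.2–3.4]{C2011}.

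Then I would treat the third datum $\omega$. The equation $\Omega + \Phi^\ast \Omega = s^\ast \omega - t^\ast \omega$ says that the multiplicative Atiyah $2$-form $\Omega + \Phi^\ast \Omega$ on $t^\ast L$ is exact in a specific sense, with $\omega$ its "primitive over $M$." Apply Theorem \ref{theor:Hitchin1_bis} to the left-hand side: it corresponds to an IM Atiyah $2$-form $(\boldsymbol\Dd, \boldsymbol l)$ on $A$, and under Remark \ref{rem:Hitchin}'s identification $A \cong J^1 L$, the pair $\boldsymbol\Dd$ and $\boldsymbol l$ unpack to the two algebraic identities $\varphi^2 = -\operatorname{id} - J^\sharp \omega^\flat$ (the second equation of (\ref{eq:alg})) and $\omega = -(\omega_J + \varphi^\ast \omega_J)$; equivalently, giving $\omega$ on $M$ with $\Omega + \Phi^\ast \Omega = s^\ast\omega - t^\ast\omega$ is equivalent to solving $\omega^\flat = -J^\sharp{}^{-1}(\operatorname{id}+\varphi^2)$, which is solvable for $\omega$ precisely when $\varphi^2 + \operatorname{id}$ lands in the image of $J^\sharp$, and this is automatic once (\ref{eq:C3}) holds at the level of the Atiyah form, or rather it is the definition of $\omega$. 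For the converse direction, given $(J,\varphi,\omega)$ satisfying the listed equations, one integrates $J$ to $H$, integrates $\varphi$ (viewed as an IM-datum on $A \cong J^1 L$) to a multiplicative $\Phi$ using Theorem \ref{theor:Hitchin1_bis}, and checks that $\Omega + \Phi^\ast\Omega$ is multiplicative with associated IM Atiyah form determined by $\omega$, so that $\Omega + \Phi^\ast\Omega = s^\ast\omega - t^\ast\omega$ by the uniqueness in Theorem \ref{theor:Hitchin1_bis}. Throughout, source-simply-connectedness is what upgrades the infinitesimal-to-global bijections from injective (Lie's first theorem for morphisms) to bijective.

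The main obstacle, I expect, is bookkeeping rather than conceptual: one must verify that the various infinitesimalizations are mutually consistent — in particular that the $\varphi$ extracted from $\Phi$ by restriction to $DL$ is the same $\varphi$ that appears when one infinitesimalizes $\Phi^\ast \Omega$ via Theorem \ref{theor:Hitchin1_bis}, and that the sign conventions in $\Omega + \Phi^\ast\Omega = s^\ast\omega - t^\ast\omega$ reproduce exactly $\omega = -(\omega_J + \varphi^\ast\omega_J)$ and not some variant. A careful diagram chase with the maps $s, t, m, \pr_1, \pr_2, \mathsf{i}$ and the connection $\nabla^\Gg$ is needed to pin these down. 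Since the corresponding statement for generalized complex structures is \cite[Theorems 3.2, 3.3, 3.4]{C2011}, I would structure the proof to parallel Crainic's argument, substituting Atiyah forms for ordinary forms and invoking Theorem \ref{theor:Hitchin1_bis} and Remark \ref{rem:Hitchin} at each point where \cite{C2011} invokes the classical multiplicative-to-IM correspondence; the residual work is to confirm that nothing in the line-bundle twisting obstructs this translation, which is exactly what the machinery of Sections 2–4 was set up to guarantee.
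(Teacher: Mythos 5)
Your overall strategy---treat the three data componentwise, use Remark \ref{rem:Hitchin} (i.e.\ \cite{CS2015}) for the pair $(H,J)$, and use Theorem \ref{theor:Hitchin1_bis} as the replacement for the classical multiplicative-to-IM correspondence, in parallel with Theorems 3.3 and 3.4 of \cite{C2011}---is exactly the route the paper itself indicates (the paper offers no more detail than this, apart from mentioning an alternative via Jacobi quasi-Nijenhuis structures). But the execution has a genuine flaw in the treatment of the third datum: you invert $J$ on the base, writing $\omega_J=J^{-1}$, ``solving'' $\omega^\flat=-(J^\sharp)^{-1}(\operatorname{id}+\varphi^2)$, and asserting $\omega=-(\omega_J+\varphi^\ast\omega_J)$. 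The Jacobi structure $J$ on $L\to M$ induced by a multiplicative contact structure on $\Gg$ is in general degenerate---non-degeneracy lives on the groupoid, not on the base, which is precisely why Theorem \ref{theor:Hitchin1} is not a corollary of Proposition \ref{prop:Hitchin_pair}. So $J^{-1}$ need not exist, $\omega$ is \emph{not} determined by $(J,\varphi)$, and the formula $\omega=-(\omega_J+\varphi^\ast\omega_J)$, which in Proposition \ref{prop:Hitchin_pair} is a statement in the non-degenerate case, has no meaning here: $\omega$ must be carried as an independent datum on both sides, and condition (3), $\Omega+\Phi^\ast\Omega=s^\ast\omega-t^\ast\omega$, is to be differentiated by applying Theorem \ref{theor:Hitchin1_bis} to the multiplicative Atiyah $2$-form $\Omega+\Phi^\ast\Omega$, producing in particular $\varphi^2=-\operatorname{id}-J^\sharp\omega^\flat$. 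Relatedly, your attribution of (\ref{eq:C3}) to $d_{D(t^\ast L)}\Omega_\Phi=0$ alone cannot be right as stated, since (\ref{eq:C3}) involves $d_{DL}\omega$ on the base, which only enters through condition (3); the quasi-Nijenhuis torsion identity arises from combining the two.

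Two further points need attention. First, the first equation of (\ref{eq:alg}), $\varphi J^\sharp=J^\sharp\varphi^\dag$, never appears in your matching, although the theorem requires it; one expects it to be the pointwise infinitesimal shadow of the compatibility (i) $\Omega^\flat\Phi=\Phi^\dag\Omega^\flat$, with the differential condition (\ref{eq:C2}) reflecting instead the multiplicativity of $\Phi$---in any case the assignment of groupoid conditions to the five infinitesimal equations has to be pinned down by the actual computation, not guessed. Second, the well-definedness of $\varphi=\Phi|_{DL}$ is flagged in the statement precisely because it requires an argument: $D(t^\ast L)|_M$ is larger than $DL$, so one must show that at unit points the multiplicativity conditions (1)--(3) force $\Phi$ to preserve the canonical copy of $DL$ inside $D(t^\ast L)|_M$ (coming from $t\circ u=\operatorname{id}$); ``a multiplicative endomorphism preserves the units'' is not yet that argument. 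With these repairs the proposal does follow the proof the paper has in mind.
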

 
\noindent  Theorem \ref{theor:Hitchin1} can be proved using arguments similar to those in Theorems 3.3 and 3.4 in \cite{C2011}. Alternatively, one could use a conceptual approach similar to that of \cite{SX2007}, exploiting the notion of Jacobi quasi-Nijenhuis structure \cite{S2010,CNN2010}. Finally, we observe that the last equation of (\ref{eq:alg}) and Equation (\ref{eq:C4}) do not have a Lie groupoid/Lie algebroid interpretation.

\end{document}